\newtheorem{theorem}{{Theorem}}[]
\newtheorem{proposition}[theorem]{{Proposition}}
\theoremstyle{definition}
\newtheorem{definition}[theorem]{{Definition\ }}
\begin{document}

\title{On the Feichtinger Conjecture}
\author{ P. G\u avru\c ta }
\address{ "Politehnica" University of Timi\c soara, Department of Mathematics,
 300006, Timi\c soara, Romania.}
\email{pgavruta@yahoo.com}

\subjclass[2010]{46C05, 42C15}

\keywords{Bessel sequence, Riesz sequence, Feichtinger Conjecture}

\abstract{We prove the Feichtinger Conjecture for a class of Bessel sequences of unit norm vectors in a Hilbert space. Also, we prove that every Bessel sequence of unit vectors in a Hilbert space can be partitioned into finitely many uniformly separated sequences.} }

\maketitle

\section{Introduction}
There are many variations of the Feichtinger Conjecture, all equivalent with the following:\\

\textit{Every Bessel sequence of unit vectors in a Hilbert space can be partitioned into finitely many Riesz sequences.}\\

For details on the Feichtinger Conjecture and the connection with other problems, see \cite{PGCasazza1}, \cite{PGCasazza2}, \cite{Chalendar}, \cite{Lata}, \cite{Lata1} and references of these papers.\\

We denote by $\mathcal{H}$ a Hilbert space and $\mathcal{F}=\{f_n\}_{n\in\mathbb{N}}\subset\mathcal{H}.$ We say that $\mathcal{F}$ is a \textit{Bessel sequence} if there exists $B>0$ so that $$\sum_{n=0}^{\infty}|\langle x,f_n\rangle|^2\leq B\|x\|^2, \quad\forall x\in\mathcal{H}.$$
$B$ is called Bessel constant for $\mathcal{F}$.\\

We say that $\mathcal{F}$ is a \textit{frame} for $\mathcal{H}$ if it is a Bessel sequence and there exists $A>0$ so that $$A\|x\|^2\leq\displaystyle \sum_{n=0}^{\infty} |\langle x,f_n\rangle|^2,\quad\forall x\in\mathcal{H}.$$\\

For important applications of frames see the references of the paper \cite{Gavruta}.\\

We say that $\mathcal{F}$ is a \textit{Riesz sequence (or Riesz basic sequence)} if there are $A,B>0$ such that $$A\sum |c_k|^2\leq\|\sum c_kf_k\|\leq B\sum|c_k|^2,$$
for any finite sequence $(c_k).$ Riesz sequences are particular cases of frames (see \cite{Christensen}).\\

Let be $I\subset\mathbb{N}.$ If $\mathcal{F}$ is a Bessel sequences in $\mathcal{H},$ then $\mathcal{F}_I=\{f_n\}_{n\in I}$ is clearly also a Bessel sequence in $\mathcal{H}$.\\

In \cite{Christensen}, O. Christensen, using Schur's test, give conditions on a sequence $\{f_n\}_{n=0}^\infty$ to be a Bessel sequence, that it only involves inner products between the elements $\{f_n\}_{n=0}^\infty$:
\begin{proposition}\cite{Christensen} Let $\{f_n\}_{n=0}^\infty$ be a sequence in $\mathcal{H}$ and assume that there exists a constant $B>0$ such that $$\sum_{k=0}^\infty|\langle f_j,f_k\rangle|\leq B,\quad\forall j\in\mathbb{N}.$$
Then $\{f_n\}_{n=0}^\infty$ is a Bessel sequence with bound $B$.
\end{proposition}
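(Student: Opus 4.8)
The plan is to prove the Bessel bound directly from a Cauchy--Schwarz estimate, which is precisely the essence of Schur's test; it suffices to bound every finite partial sum $\sum_{n=0}^{N}|\langle x,f_n\rangle|^2$ uniformly in $N$ and then let $N\to\infty$. Fix $x\in\mathcal{H}$ and $N\in\mathbb{N}$, and abbreviate $a_n=\langle x,f_n\rangle$ and $S_N=\sum_{n=0}^{N}|a_n|^2$. First I would record the identity
$$S_N=\sum_{n=0}^{N}a_n\overline{a_n}=\sum_{n=0}^{N}a_n\langle f_n,x\rangle=\Big\langle \sum_{n=0}^{N}a_nf_n,\,x\Big\rangle,$$
and apply the Cauchy--Schwarz inequality in $\mathcal{H}$ to obtain $S_N\le \big\|\sum_{n=0}^{N}a_nf_n\big\|\,\|x\|$.

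The heart of the argument is to bound $\big\|\sum_{n=0}^{N}a_nf_n\big\|^2$ in terms of $S_N$ using only the hypothesis on the inner products. Expanding the square gives
$$\Big\|\sum_{n=0}^{N}a_nf_n\Big\|^2=\sum_{j=0}^{N}\sum_{k=0}^{N}a_j\overline{a_k}\,\langle f_j,f_k\rangle\le\sum_{j=0}^{N}\sum_{k=0}^{N}|a_j|\,|a_k|\,|\langle f_j,f_k\rangle|.$$
I would then symmetrize: by the elementary inequality $|a_j|\,|a_k|\le \tfrac12(|a_j|^2+|a_k|^2)$ together with the symmetry $|\langle f_j,f_k\rangle|=|\langle f_k,f_j\rangle|$, the double sum is dominated by $\sum_{j=0}^{N}|a_j|^2\sum_{k=0}^{N}|\langle f_j,f_k\rangle|$. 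The hypothesis $\sum_{k=0}^{\infty}|\langle f_j,f_k\rangle|\le B$ then bounds the inner sum by $B$, yielding $\big\|\sum_{n=0}^{N}a_nf_n\big\|^2\le B\,S_N$.

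Combining the two estimates gives $S_N\le\sqrt{B\,S_N}\,\|x\|$, i.e. $S_N^2\le B\,S_N\,\|x\|^2$. If $S_N=0$ the desired bound is trivial, and otherwise dividing by $S_N$ gives $S_N\le B\|x\|^2$. Since this holds for every $N$, letting $N\to\infty$ yields $\sum_{n=0}^{\infty}|\langle x,f_n\rangle|^2\le B\|x\|^2$, so $\{f_n\}$ is a Bessel sequence with bound $B$. I expect the only delicate point to be the symmetrization step, where one must use both the arithmetic--geometric mean estimate and the symmetry of $|\langle f_j,f_k\rangle|$ to convert the full double sum into a single sum controlled by the hypothesis; everything else is routine.
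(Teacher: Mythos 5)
Your proof is correct, and it takes a genuinely more elementary route than the one the paper points to. The paper itself gives no proof of this proposition --- it is quoted from \cite{Christensen}, where the argument is operator-theoretic: one first shows $\|\sum_k c_kf_k\|^2\leq B\sum_k|c_k|^2$ for all finite scalar sequences, via two applications of the Cauchy--Schwarz inequality after splitting $|\langle f_j,f_k\rangle|=|\langle f_j,f_k\rangle|^{1/2}\cdot|\langle f_j,f_k\rangle|^{1/2}$ (the same device this paper uses in proving Theorem \ref{Theorem1}), and then invokes the general equivalence that $\{f_n\}$ is a Bessel sequence with bound $B$ if and only if the synthesis operator $T\colon l^2\to\mathcal{H}$ is well defined and bounded with $\|T\|\leq\sqrt{B}$, i.e.\ the Bessel bound is obtained by passing to the adjoint $\Theta=T^*$. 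You prove the same core quadratic-form estimate, with the AM--GM symmetrization $|a_j||a_k|\leq\frac{1}{2}(|a_j|^2+|a_k|^2)$ plus the symmetry $|\langle f_j,f_k\rangle|=|\langle f_k,f_j\rangle|$ serving as an equally valid substitute for the double Cauchy--Schwarz, but you then bypass the adjoint step entirely through the duality identity $S_N=\langle\sum_n a_nf_n,x\rangle\leq\|\sum_n a_nf_n\|\,\|x\|$, which converts the synthesis bound into the analysis bound with bare hands. What your version buys is self-containedness: only finite sums appear, so no questions of $l^2$-convergence, well-definedness of $T$, or adjoints ever arise, and the limit $N\to\infty$ at the end is just monotone convergence of partial sums. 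What the operator-theoretic route buys is the extra structural conclusion that $T$ is bounded on all of $l^2$ with $\|T\|\leq\sqrt{B}$, a fact this paper actually uses elsewhere (its proof of Theorem \ref{Theorem1} manipulates $T$ and $\Theta$ as globally defined bounded operators), so the citation is not merely a shortcut but feeds the later arguments.
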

We call this sequences \textit{Bessel-Schur sequences}.\\

The intrinsically localized sequences, introduced by K. Gr\"ochenig in \cite{Grochenig}, are particular cases of Bessel-Schur sequences. In the same paper, he proves that every localized frame is a finite union of Riesz sequences.

On the other hand, we recall the following definition:
\begin{definition}\cite{Chalendar}
A sequence $\{f_n\}_{n\in I}$ of unit vectors in $\mathcal{H}$ is called \textit{separated} if there exists a constant $\gamma<1$ such that $$|\langle f_n,f_k\rangle|\leq \gamma$$
for any $n, k\in\mathbb{N}$, $n\neq k$.
\end{definition}
In \cite{Chalendar}, the authors, among others, give the following result:
\begin{theorem}
Let $\mathcal{H}$ be a Hilbert space and let $\{f_n\}_{n\in I}$ be a Bessel sequence of unit vectors in $\mathcal{H}.$ Then $\{f_n\}_{n\in I}$ can be partitioned into finitely many separated Bessel sequences.
\end{theorem}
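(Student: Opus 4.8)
The plan is to reformulate the statement as a graph-colouring problem and to extract a uniform bound on vertex degrees from Bessel's inequality. Fix once and for all a number $\gamma$ with $0<\gamma<1$ and let $B$ be a Bessel bound for $\{f_n\}_{n\in I}$. Define a graph $G$ on the vertex set $I$ by joining two distinct indices $n,k$ by an edge precisely when $|\langle f_n,f_k\rangle|>\gamma$. A subset $J\subseteq I$ is then an independent set of $G$ if and only if $|\langle f_n,f_k\rangle|\le\gamma$ for all distinct $n,k\in J$, i.e. if and only if $\{f_n\}_{n\in J}$ is separated with constant $\gamma$. Thus partitioning $\{f_n\}_{n\in I}$ into finitely many separated sequences is exactly the same as properly colouring $G$ with finitely many colours.

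The key step is to show that $G$ has bounded degree. Fix $j\in I$ and apply the Bessel inequality to the unit vector $x=f_j$; since the term $k=j$ contributes $|\langle f_j,f_j\rangle|^2=1$, one obtains
\[
\sum_{k\ne j}|\langle f_j,f_k\rangle|^2\le B-1 .
\]
Every neighbour $k$ of $j$ in $G$ satisfies $|\langle f_j,f_k\rangle|^2>\gamma^2$, so the number of such $k$ is strictly less than $(B-1)/\gamma^2$. Hence the degree of every vertex is at most $d:=\lfloor (B-1)/\gamma^2\rfloor$, a finite quantity depending only on $B$ and $\gamma$.

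It remains to colour a graph of maximum degree $d$ with $d+1$ colours. Since $I\subseteq\mathbb{N}$ is at most countable, I would proceed greedily: list the vertices as $0,1,2,\dots$ and assign to each the least element of $\{1,\dots,d+1\}$ not already used by one of its previously coloured neighbours; as each vertex has at most $d$ neighbours, such a colour always exists. Each of the $d+1$ colour classes is an independent set of $G$, hence a separated sequence by the observation above, and, being a subsequence of a Bessel sequence, is itself a Bessel sequence (as recalled in the excerpt). This yields the desired partition into at most $\lfloor (B-1)/\gamma^2\rfloor+1$ separated Bessel sequences.

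The genuinely substantive point is the degree bound, which is where the Bessel hypothesis and the unit-norm normalisation enter; everything after that is a routine greedy colouring. One mild subtlety worth checking is the infinite case: the greedy procedure must be justified for a countably infinite graph, but bounded degree makes this immediate, so no compactness argument (such as De Bruijn--Erd\H{o}s) is actually needed.
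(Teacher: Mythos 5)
Your argument is correct: applying the Bessel inequality to $x=f_j$ indeed gives $\sum_{k\neq j}|\langle f_j,f_k\rangle|^2\le B-1$, every neighbour of $j$ in your graph contributes more than $\gamma^2$ to this sum, so the maximum degree is at most $d:=\lfloor (B-1)/\gamma^2\rfloor$, and the greedy $(d+1)$-colouring of a countable bounded-degree graph is unproblematic since each vertex consults only finitely many predecessors; colour classes are independent, hence separated with constant $\gamma$, and subsequences of Bessel sequences are Bessel. However, this is a genuinely different route from the paper's. The paper does not reprove this statement (it quotes it from \cite{Chalendar}); what it proves, in Section 3, is a refinement: every Bessel sequence of unit vectors splits into finitely many \emph{uniformly} separated sequences, meaning $\sup_{j}\sum_{i\neq j}|\langle f_i,f_j\rangle|^2<1$ on each piece (Definition \ref{Definition1}). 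Both proofs start from the same row-sum estimate $\sum_{i\neq j}|\langle f_j,f_i\rangle|^2\le B-1$, but the paper then applies Mills' Lemma to the symmetric matrix $a_{ij}=|\langle f_i,f_j\rangle|^2$ (zero diagonal): $\mathbb{N}$ can be partitioned into two sets on each of which the row sums drop to at most $(B-1)/2$, and iterating $m$ times yields $2^m$ classes with row sums at most $(B-1)/2^m<1$. Your approach is more elementary and self-contained --- no external combinatorial lemma, an explicit class count $\lfloor (B-1)/\gamma^2\rfloor+1$, and a separation constant prescribed in advance. What Mills' Lemma buys the paper is the strictly stronger conclusion: within each class the \emph{entire sum} of squared off-diagonal inner products is below $1$, not merely each individual inner product below $\gamma$. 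Your colouring cannot deliver that, since a bound on each term says nothing about the sum over a possibly infinite colour class; and it is exactly this uniform version that the paper needs for its equivalent reformulation of the Feichtinger Conjecture, and whose $\ell^1$ analogue (combined with Theorem \ref{Theorem1}) settles the conjecture for Bessel--Schur sequences.
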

In the following, we prove that the Bessel-Schur sequences satisfies the Feichtinger Conjecture.
Also, we prove that every Bessel sequence of unit vectors in a Hilbert space can be partitioned into finitely many uniformly separated sequences.
\section{The results}
First, we give a condition for a Bessel sequence of unit vectors to be a Riesz sequence.
\begin{theorem}\label{Theorem1} Let $\mathcal{F}_I=\{f_n\}_{n\in I}$ be a Bessel sequence of unit vectors. We suppose that $$\sigma:=\sup_{j\in I}\sum_{\substack{
i\in I \\
i\neq j
}}|\langle f_i,f_j\rangle|<1.$$
Then $\mathcal{F}_I$ is a Riesz sequence.
\end{theorem}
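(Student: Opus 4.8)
The plan is to establish the two-sided Riesz inequality directly from the hypothesis $\sigma<1$, using the standard trick of separating the diagonal terms from the off-diagonal terms in the Gram-type expansion of $\|\sum_k c_k f_k\|^2$. Since the upper Riesz bound follows immediately from the Bessel property — indeed, for any finite sequence $(c_k)_{k\in J}$ with $J\subset I$ finite, one has $\|\sum_{k\in J} c_k f_k\|^2 \le B\sum_{k\in J}|c_k|^2$ by a routine duality argument applied to the Bessel bound — the real content lies in the lower bound, so I would concentrate the effort there.

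For the lower bound, I would expand
\begin{equation*}
\Big\|\sum_{k\in J} c_k f_k\Big\|^2 = \sum_{k\in J}|c_k|^2 \|f_k\|^2 + \sum_{\substack{j,k\in J\\ j\neq k}} c_k \overline{c_j}\,\langle f_k,f_j\rangle.
\end{equation*}
The diagonal term equals $\sum_{k\in J}|c_k|^2$ because the vectors are unit vectors. For the off-diagonal sum I would estimate its modulus from below by bounding the total contribution of the cross terms. The key step is to dominate $\big|\sum_{j\neq k} c_k\overline{c_j}\langle f_k,f_j\rangle\big|$ by $\sigma\sum_{k\in J}|c_k|^2$. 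This follows by applying the inequality $|c_k\overline{c_j}|\le \tfrac12(|c_k|^2+|c_j|^2)$ to each cross term and then invoking the hypothesis $\sup_j \sum_{i\neq j}|\langle f_i,f_j\rangle| = \sigma$ to bound the resulting row and column sums; after symmetrizing, each of $|c_k|^2$ and $|c_j|^2$ is weighted by at most $\sigma$, yielding the claimed bound. Combining gives
\begin{equation*}
\Big\|\sum_{k\in J} c_k f_k\Big\|^2 \ge \sum_{k\in J}|c_k|^2 - \sigma\sum_{k\in J}|c_k|^2 = (1-\sigma)\sum_{k\in J}|c_k|^2,
\end{equation*}
and since $\sigma<1$ the constant $1-\sigma>0$ serves as the lower Riesz bound $A$.

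The main obstacle I anticipate is the careful handling of the double-sum estimate: one must make sure that after applying $|c_k\overline{c_j}|\le\tfrac12(|c_k|^2+|c_j|^2)$ and summing over all off-diagonal pairs, the two halves of the symmetrized sum are each controlled by the \emph{supremum} $\sigma$ over a single index, rather than accumulating uncontrollably. Concretely, one has $\sum_{j\neq k}|c_k|^2|\langle f_k,f_j\rangle| \le \sum_k |c_k|^2 \big(\sum_{j\neq k}|\langle f_k,f_j\rangle|\big) \le \sigma\sum_k|c_k|^2$, and the symmetric partner is handled identically after relabeling indices; I would spell this reindexing out explicitly to confirm the factor $\sigma$ (not $2\sigma$). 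With both Riesz bounds in hand, the finite sequence $(c_k)$ being arbitrary, the sequence $\mathcal{F}_I$ satisfies the defining Riesz-sequence inequalities with constants $A=1-\sigma$ and $B$, completing the proof.
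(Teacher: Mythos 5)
Your proof is correct, but it takes a genuinely different route from the paper. The paper works with the analysis and synthesis operators $\Theta$ and $T$: it shows via Cauchy--Schwarz and the symmetry of $|\langle f_i,f_j\rangle|$ that $\|\Theta T - I\|\le\sigma<1$, concludes that $\Theta T$ is invertible and hence $\Theta$ surjective, so that $\mathcal{F}_I$ is a Riesz--Fischer sequence, and then invokes a theorem of Young (existence of a lower bound for Riesz--Fischer sequences) together with the Bessel property to get the two Riesz inequalities. You instead estimate the Gram quadratic form directly: expanding $\|\sum_{k\in J} c_k f_k\|^2$ into diagonal plus off-diagonal parts and bounding the cross terms by $\sigma\sum_k|c_k|^2$ via $|c_k\overline{c_j}|\le\tfrac12(|c_k|^2+|c_j|^2)$ and the row/column sum hypothesis (your bookkeeping is right: the two symmetrized halves each contribute $\tfrac{\sigma}{2}\sum_k|c_k|^2$, so the factor is $\sigma$, not $2\sigma$). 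Both arguments pivot on the same Schur-test idea --- the hypothesis bounds row sums and, by symmetry, column sums of the off-diagonal Gram matrix --- but yours is more elementary and self-contained: it avoids the detour through invertibility, surjectivity, Riesz--Fischer sequences and the citation of Young's theorem, and it produces the explicit lower constant $A=1-\sigma$, which the paper's route leaves implicit. Indeed, your expansion also yields the upper bound $(1+\sigma)\sum_k|c_k|^2$ for free, so the Bessel hypothesis is not even needed except to quote the constant $B$; the paper, by contrast, genuinely uses the Bessel bound for the upper inequality. The one place you gesture rather than prove --- the upper Riesz bound ``by a routine duality argument'' --- is the same standard fact the paper cites from Christensen, so nothing is missing.
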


\begin{proof}If $\mathcal{F}_I$ is a Bessel sequence in $\mathcal{H}$, then the following operators are linear and bounded:
$$T:l^2(I)\rightarrow\mathcal{H},\quad T(c_i)=\sum_{i\in I} c_if_i\quad \mbox{(synthesis operator)}$$
$$\Theta:\mathcal{H}\rightarrow l^2(I),\quad \Theta x=\{\langle x, f_i\rangle\}_{i\in I}\quad \mbox{(analysis operator)}$$
Moreover, $\Theta$ is the adjoint of $T$ (see \cite{Christensen}).\\

For $c=(c_n)_{n\in I}\in l^2(I)$, we have
\begin{align*}
(\Theta T)(c)&=\bigg\{\langle \sum_{
k\in I
}c_kf_k,f_j\rangle\bigg\}_{j\in I}\\
             &=\bigg\{\sum_{
k\in I
} c_k \langle f_k,f_j\rangle\bigg\}_{j\in I}\\
 \end{align*}
hence
 $$(\Theta T)(c)-c=\bigg\{ \sum_{k\neq j} c_k \langle f_k,f_j\rangle\bigg\}_{j\in I}.$$\\
 It follows, with Cauchy-Schwartz inequality,
 \begin{align*}
\|(\Theta T)(c)-c\|_2^2&=\sum_{j\in I}\bigg|\sum_{k\neq j}c_k\langle f_k,f_j\rangle\bigg|^2\\
                       &\leq\sum_{j\in I}\bigg(\sum_{\substack{k\in I \\k\neq j}}|c_k||\langle
                           f_k,f_j\rangle|^{1/2}\cdot|\langle f_k,f_j\rangle|^{1/2}\bigg)^2\\
                       & \leq\sum_{j\in I}\bigg(\sum_{\substack{k\in I \\k\neq j}}|c_k|^2|\langle
                            f_k,f_j\rangle|\bigg)\bigg(\sum_{\substack{k\in I \\k\neq j}}|\langle f_k,f_j\rangle| \bigg)\\
                        &\leq\sigma\sum_{j\in I}\bigg(\sum_{\substack{k\in I \\k\neq j}}|c_k|^2|\langle f_k,f_j\rangle|\bigg)\\.
   \end{align*}
   Changing the order of summation, we obtain
 \begin{align*}
\|(\Theta T)(c)-c\|_2^2&\leq\sigma\sum_{k\in I}|c_k|^2\sum_{\substack{j\in I \\j\neq k}}\langle f_k,f_j\rangle|\\
                       &\leq\sigma^2\|c\|_2^2,
\end{align*}
so $\|\Theta T-I\|\leq \sigma<1.$\\

 Therefore $\Theta T$ is invertible, thus $\Theta$ is surjective. It follows that $\mathcal{F}_I$ is a Riesz-Fischer sequence. From Theorem 3 in \cite{Young}, Ch. 4, Sec. 2, we have that there exists $A>0$ so that $$A\sum|c_k|^2\leq\|\sum c_kf_k\|^2$$ for every finite sequence $(c_k).$\\

Since $\mathcal{F}_I$ is a Bessel sequence, we have
 $$\|\sum c_kf_k\|^2\leq B\sum|c_k|^2$$ for $(c_k)$ finite sequence (see \cite{Christensen}). So $\mathcal{F}_I$ is a Riesz sequence.
\end{proof}

\begin{theorem}Every Bessel-Schur sequence of unit vectors is union of finite Riesz sequences.
\end{theorem}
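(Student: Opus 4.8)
The plan is to reduce the statement to Theorem~\ref{Theorem1} by partitioning the index set into finitely many blocks on each of which the quantity $\sigma$ is strictly less than $1$. Write $w_{ij} := |\langle f_i, f_j\rangle|$ and observe that, since the vectors have unit norm, the Bessel--Schur hypothesis $\sum_k w_{jk} \le B$ yields $\sum_{i\neq j} w_{ij} \le B-1 =: C$ for every $j$. Thus the whole task becomes combinatorial: partition $\mathbb{N}$ into classes $I_1,\ldots,I_N$ so that for every class and every $j$ in it one has $\sum_{i\in I_\ell,\, i\neq j} w_{ij}\le t$ for a fixed $t<1$. Each $\{f_n\}_{n\in I_\ell}$ is then a Bessel sequence of unit vectors (being a subfamily of a Bessel sequence) with $\sigma\le t<1$, so Theorem~\ref{Theorem1} makes it a Riesz sequence.

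For the partition I would first treat a finite truncation $\{0,\ldots,m\}$ via a local-search (potential-function) argument. Fix $N$ with $C/N<1$ and a threshold $t$ with $C/N<t<1$. Starting from any assignment of $\{0,\ldots,m\}$ into $N$ classes, repeatedly relocate a vertex $j$ whose internal weighted degree $\sum_{i\in I_\ell,\, i\neq j} w_{ij}$ exceeds $t$ to a class minimizing that degree. Since $\sum_\ell \sum_{i\in I_\ell} w_{ij} = \sum_{i\neq j} w_{ij}\le C$, the minimizing class has internal degree $\le C/N < t$, so each relocation strictly decreases the total internal weight $\Phi=\sum_\ell \sum_{\{i,j\}\subset I_\ell} w_{ij}$ by at least $t-C/N>0$. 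As $\Phi\ge 0$, the process halts after finitely many moves, leaving every internal weighted degree $\le t$.

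To pass from the truncations to all of $\mathbb{N}$ I would use a compactness (diagonal) argument: the colorings $\chi_m$ of $\{0,\ldots,m\}$, extended arbitrarily, lie in the compact space $\{1,\ldots,N\}^{\mathbb{N}}$, so some subsequence converges pointwise to a coloring $\chi$. For each fixed $j$ and radius $R$, eventual agreement of the subsequence with $\chi$ on $\{0,\ldots,R\}$ shows that $\sum_{i\le R,\, i\neq j,\, \chi(i)=\chi(j)} w_{ij}\le t$; letting $R\to\infty$ gives $\sum_{i\in I_\ell,\, i\neq j} w_{ij}\le t<1$ for the limit classes $I_\ell=\chi^{-1}(\ell)$. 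Applying Theorem~\ref{Theorem1} to each of these $N$ classes completes the proof.

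The step I expect to be the real obstacle is producing the finite partition with \emph{two-sided} control on $\sigma$: a naive one-pass greedy assignment controls only the sum over earlier indices and leaves the sum over later indices unbounded. The potential-function local search is precisely what overcomes this, since it enforces the bound symmetrically at every vertex simultaneously. Once that is in hand, the choice of constants ($N>C$ and $C/N<t<1$) and the compactness passage to $\mathbb{N}$ are routine.
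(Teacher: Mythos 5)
Your proof is correct, and it shares the paper's overall architecture: use the unit-norm hypothesis to get $\sum_{i\neq j}|\langle f_i,f_j\rangle|\leq B-1$ for every $j$, partition $\mathbb{N}$ into finitely many classes on which these off-diagonal row sums drop below $1$, and then apply Theorem \ref{Theorem1} classwise. Where you differ is in how the partition is produced. The paper invokes Mills' Lemma as a black box (citing Garnett and Thomas): a two-way split that halves the bound, iterated $m$ times to give $2^m$ classes with bound $(B-1)/2^m<1$. You instead prove the needed combinatorial statement from scratch, in one shot: fix $N>B-1$ and a threshold $t$ with $(B-1)/N<t<1$, run a potential-function local search on finite truncations (each relocation of an overloaded index drops the internal weight $\Phi$ by at least $t-(B-1)/N>0$, so the search terminates), and pass to all of $\mathbb{N}$ by compactness of $\{1,\dots,N\}^{\mathbb{N}}$ and a diagonal argument. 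This is essentially the standard proof of an $N$-ary version of Mills' Lemma, so your argument is self-contained where the paper relies on the literature, and it yields roughly $\lceil B\rceil$ classes instead of the $2^{\lceil \log_2(B-1)\rceil}$ classes from iterated bisection --- a modest quantitative gain. Two small points worth making explicit: your potential argument, summing over unordered pairs, silently uses the symmetry $|\langle f_i,f_j\rangle|=|\langle f_j,f_i\rangle|$ (which of course holds); and when you call each class a Bessel sequence of unit vectors, the underlying fact that a Bessel--Schur sequence is Bessel is exactly Proposition 1 (Schur's test), which the paper states and which your reduction to Theorem \ref{Theorem1} needs.
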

\begin{proof}
Let $j\in\mathbb{N}$ fixed. We have:
$$\sum_{i=0}^{\infty}|\langle f_j,f_i\rangle|\leq B,$$
hence \begin{equation}\label{eq1}\sum_{\substack{
i=0 \\
i\neq j
}}|\langle f_j,f_i\rangle|\leq B-1,\quad\mbox{for any $j\in\mathbb{N}$}.
\end{equation}
We denote $$a_{ij}=\begin{cases}
|\langle f_j,f_i\rangle| &,\quad j\neq i\\
0 &,\quad j=i
\end{cases}
$$
We have $a_{ij}=a_{ji}\geq 0$ and $a_{ii}=0.$\\

The relation (\ref{eq1}) is equivalent with $$\sup_{j\in\mathbb{N}}\sum_{i\in\mathbb{N}}a_{ij}\leq B-1.$$
By Mills' Lemma (see \cite{Garnett},Ch.X or \cite{Thomas}) there is a partition $\mathbb{N}=I_1\cup I_2$ such that $$\sup_{j\in I_p}\sum_{i\in I_p}a_{ij}\leq\frac{B-1}{2};\quad p=1,2.$$
By iteration, for any $m\geq 1$, there is a partition $\mathbb{N}=I_1\cup I_2\cup\ldots\cup I_{2^m}$ such that $$\sup_{j\in I_p}\sum_{
i\in I_p}a_{ij}\leq\frac{B-1}{2^m},\quad \forall p=1,2,\ldots 2^m.$$
We take $m$ so that $\frac{B-1}{2^m}<1$ and apply Theorem \ref{Theorem1}.
\end{proof}
\section{An equivalent form of the Feichtinger conjecture}
We consider the following class of sequences.
\begin{definition}\label{Definition1} Let $\mathcal{F}_I=\{f_n\}_{n\in I}$ be a sequence of unit vectors. We say that this sequence is \textit{uniformly separated} if the following condition holds $$\eta:=\sup_{j\in I}\sum_{\substack{
i\in I \\
i\neq j
}}|\langle f_i,f_j\rangle|^2<1.$$
\end{definition}

The following result is a refinement of a result from \cite{Chalendar}.
\begin{theorem}Every Bessel sequence of unit vectors is union of finite uniformly separated sequences.
\end{theorem}
\begin{proof}Let $\mathcal{F}$ be a Bessel sequence of unit vectors:
$$\sum_{i=0}^{\infty}|\langle x,f_i\rangle|^2\leq B\|x\|^2,\quad\forall x\in\mathcal{H}.$$
Let $j\in\mathbb{N}$ fixed. We take $x=f_j:$
$$\sum_{i=0}^{\infty}|\langle f_j,f_i\rangle|^2\leq B\|f_j\|^2=B,$$
hence \begin{equation}\label{eq2}\sum_{\substack{
i=0 \\
i\neq j
}}|\langle f_j,f_i\rangle|^2\leq B-1,\quad\mbox{for any $j\in\mathbb{N}$}.
\end{equation}
It is clear that $B\geq 1.$\\
We denote $$a_{ij}=\begin{cases}
|\langle f_j,f_i\rangle|^2 &,\quad j\neq i\\
0 &,\quad j=i
\end{cases}
$$
We have $a_{ij}=a_{ji}\geq 0$ and $a_{ii}=0.$\\

The relation (\ref{eq2}) is equivalent with $$\sup_{j\in\mathbb{N}}\sum_{i\in\mathbb{N}}a_{ij}\leq B-1.$$
By Mills' Lemma (see \cite{Garnett},Ch.X or \cite{Thomas}) there is a partition $\mathbb{N}=I_1\cup I_2$ such that $$\sup_{j\in I_p}\sum_{i\in I_p}a_{ij}\leq\frac{B-1}{2};\quad p=1,2.$$
By iteration, for any $m\geq 1$, there is a partition $\mathbb{N}=I_1\cup I_2\cup\ldots\cup I_{2^m}$ such that $$\sup_{j\in I_p}\sum_{
i\in I_p}a_{ij}\leq\frac{B-1}{2^m},\quad \forall p=1,2,\ldots 2^m.$$
We take $m$ so that $\frac{B-1}{2^m}<1$ and apply Definition \ref{Definition1}.
\end{proof}
From the above Theorem, we obtain the following equivalent form of the Feichtinger Conjecture:\\

\textit{Every uniformly separated Bessel sequence of unit norm vectors can be partitioned into finitely many Riesz sequences.}

\end{document}